\newtheorem{thm}{Theorem}
\newtheorem{lemma}[thm]{Lemma}
\newtheorem{proposition}[thm]{Proposition}
\newtheorem{corollary}[thm]{Corollary}
\newdefinition{definition}[thm]{Definition}
\newdefinition{remark}[thm]{Remark}
\newdefinition{example}[thm]{Example}
\newdefinition{algorithm}[thm]{Algorithm}
\newproof{pf}{Proof}
\begin{document}

\begin{frontmatter}

\title{Toric ideals with linear components: an algebraic interpretation of clustering the cells of a contingency table}

\author[ec,ec1]{Enrico Carlini\corref{cor1}}
\ead{enrico.carlini@polito.it,enrico.carlini@monash.edu}

\author[fr]{Fabio Rapallo}
\ead{fabio.rapallo@unipmn.it}

\cortext[cor1]{Corresponding author. Tel: +61 03 9905 4462, Fax:
+61 3 9905 4403}

\address[ec]{DISMA - Department of Mathematical Sciences, Politecnico di Torino,
Corso Duca degli Abruzzi, 24, 10129 TURIN, Italy }

\address[ec1]{School of Mathematical Sciences, Monash University, 3800 VIC, Australia}

\address[fr]{Department DISIT, Universit\`a del Piemonte Orientale,
Viale Teresa Michel, 11, 15121 ALESSANDRIA, Italy}

\begin{abstract}
{In this paper we show that the agglomeration of
rows or columns of a contingency table with a hierarchical
clustering algorithm yields statistical models defined through
toric ideals. In particular, starting from the classical
independence model, the agglomeration process adds a linear part
to the toric ideal generated by the $2 \times 2$ minors.}
\end{abstract}

\begin{keyword}
independence model \sep toric ideals \sep toric statistical models

\MSC[2010] 62H17 \sep 62H30 \sep 14M25 \sep 15B36
\end{keyword}

\end{frontmatter}

\section{Introduction} \label{introsect}

This paper aims to provide a geometric understanding of the
clustering techniques for discrete data, and in particular for
contingency tables. A two-way contingency table is an integer
data table which collects the outcomes of two categorical random
variables, i.e., it is a rectangular table of non-negative integer
numbers. Contingency table are widely used in Statistics and the
study of statistical models for this kind of data structures is an
active research area. Especially when the contingency table is
large, a natural problem is to ask whether there is some ways for
simplifying the data structure losing as few information as
possible. Therefore, a natural issue in that context is the
problem of clustering the rows, respectively the column, of a contingency
table, i.e. to find groups, respectively  columns, with similar
behavior.

In the literature, several techniques are studied in order to
cluster the rows and columns of a contingency table. As the number
of rows and columns is usually moderately small, most of the
existing algorithms fall in the class of hierarchical algorithms.
Among the most relevant references, we cite here the paper
\cite{greenacre:88}, where the clustering is defined in connection
with Correspondence Analysis, and \cite{hirotsu:09}, where the
clustering is endowed with suitable probability distributions,
essentially derived from the chi-square and the Wishart densities.
Such techniques are based on the spectral decomposition of a
special matrix derived from the observed data. In terms of
statistical models, they implicitly assume an underlying
independence model, i.e., they first assume that the two random
variables are independent, and then find discrepancies between the
observed counts and the expected counts under independence. In the
papers cited above, the reader can find several real-data examples
and a thorough description of the clustering process.

Since the definition of the algorithms is made in terms of the
chi-squared distance inherited from the exploratory techniques,
and in particular from Correspondence Analysis, there is
a lack of study of the geometric description of the statistical models
underlying the agglomerative process.

Here we use the language and tools from Algebraic Statistics
in order to determine under which conditions the clustering has a
natural algebraic and geometric counterpart. A survey of Algebraic
Statistics can be found in the book
\cite{drton|sturmfels|sullivant:09}. The use of Algebraic
Statistics for understanding the geometric structure of the
statistical models has already been considered in other papers.
Indeed, the availability of Computer Algebra systems and efficient
packages for polynomial computation have led to the study of
complex models for contingency tables. For instance,
\cite{carlini|rapallo:11} defines weakened independence models
using suitable sets of minors, while \cite{haraetal:09} adds a new
condition on a subtable to the standard independence model. In
\cite{boccietal:10} a modification of independence model to encode
a special behavior of the diagonal cells of a square table is
presented, while \cite{rapallo:12} defines statistical models to
encode the notion of outliers and patterns of outliers in
contingency tables through a model-based approach.

We collect some basic facts about the analysis of contingency
tables within Algebraic Statistics in the next sections, focusing
especially on the representation of a log-linear model in terms of
toric ideals. We also prove that under the independence model, the
action of merging two rows or two columns in a single cluster
produce a new statistical model for smaller tables which falls
again into the class of independence models. To do that, we prove
and apply some results about toric ideals. In particular, we study
how the ideal of the toric variety is affected by special
properties of and operations on the design matrix. As a general
reference to basic Algebraic Geometry we suggest \cite{harris:92}
while for the basic ideas about toric ideals we refer to
\cite{kreuzer|robbiano:05}.

In this paper we show that: $(i)$ the algebraic and geometric
structure of the independence model is a natural framework for the
clustering of the rows and columns of a contingency table; $(ii)$
merging two columns or two rows of the table it is possible
to characterize the induced toric model; $(iii)$ merging a row
with a column does not produce an easily interpretable statistical
model.

The paper is structured as follows. In Section \ref{toric-sect} we
briefly recall the basic properties of toric ideals that we need.
In Section \ref{models-sect}, we describe how toric ideals appear
in the study of statistical models. Section \ref{cluster-sect} is
devoted to give the statistical motivation for our study which
arise from the clustering algorithms of the rows or columns of a
contingency table. In Section \ref{main-sect} we collect our main
results while Section \ref{further-sect} contains pointer to
further studies.

\section{Toric varieties and toric ideals} \label{toric-sect}

In this paper we will work with {\em toric varieties} and with
{\em toric ideals}, thus we recall some of the basic notions that
we will need.

Given a $k \times (t+1)$ matrix $A=(a_{i,j})$ with non-negative
integer entries, this can be used to define a map
$T:\mathbb{R}^{t+1}\longrightarrow\mathbb{R}^k$ in the following
way
\[
T(\zeta_0,\ldots,\zeta_t)=
\]
\[
(\zeta_0^{a_{1,1}}\zeta_1^{a_{1,2}}\cdot\ldots\cdot\zeta_t^{a_{1,t+1}},\zeta_0^{a_{2,1}}\zeta_1^{a_{2,2}}\cdot\ldots\cdot\zeta_t^{a_{2,t+1}},\ldots,\zeta_0^{a_{k,1}}\zeta_1^{a_{k,2}}\cdot\ldots\cdot\zeta_t^{a_{k,t+1}})
 .
\]

The image of the map $T$,
$X=T(\mathbb{R}^{t+1})\subset\mathbb{R}^k$, is the {\em toric
variety} associated to the matrix $A$. Thus, $T$ is a
parametrization of $X$ and from this we see that $X$ is an
irreducible algebraic variety.

As a toric variety $X$ is an algebraic variety, it is the zero
locus of a set of polynomial equations. It is possible to
associate to $X$ an ideal $\mathcal{I}(X)\subset
R=\mathbb{R}[p_1,\ldots,p_k]$ which is the ideal containing all
polynomials vanishing on $X$.

As the matrix $A$ determines the toric variety via the map $T$, we
will denote with $T(A)$ the toric variety $T(\mathbb{R}^{k+1})$.
The matrix also determines the ideal of the toric variety, and we
will denote with $\mathcal{I}(A)\subset R$ the ideal
$\mathcal{I}(T(\mathbb{R}^{t+1}))$.

There are different ways of producing the toric ideal associated
to a matrix. In particular, we will make use of the following
algebraic approach. Consider the ring homomorphism
\[
\phi: \mathbb{R}[p_1,\ldots,p_k] \longrightarrow \mathbb{R}[\zeta_0,\ldots,\zeta_t]
\]
defined by setting
\begin{equation} \label{toric-alg}
\phi(p_i)=\zeta_0^{a_{i,1}}\zeta_1^{a_{i,2}}\cdot\ldots\cdot\zeta_t^{a_{i,t+1}}
\end{equation}
for $i=1,\ldots,k$ and extending it by using the homomorphism
properties. With these notations we get
$\mathcal{I}(A)=\mbox{Ker}(\phi)$.

Special features of the matrix $A$ reflect in geometric properties
of the associated toric variety $X$. In particular we examine here
the special case when the matrix $A$ has some repeated rows. In
such case $X$ is a hyperplane section of a cone over another toric
variety, namely the toric variety defined by the matrix $\bar A$
obtained from $A$ by deleting each replicate of a given row. We
can express this geometric property in algebraic terms as follows:

\begin{lemma}\label{repeatedlemma}
Let $A$ be an $k\times (t+1)$ matrix of non-negative integers
having $s$ rows $i_1,\ldots,i_s$ equal to each other. Denote with
$\bar A$ the $(k-s+1)\times (t+1)$ matrix obtained by $A$ deleting
the $s-1$ rows $i_2,\ldots,i_s$. Then one has
\[
\mathcal{I}(A)=\mathcal{I}(\bar A)+{\mathcal J}  ,
\]
where ${\mathcal J}$ is the ideal generated by the degree one
binomials $p_{i_1}-p_{i_2},\ldots,p_{i_1}-p_{i_s}$.
\end{lemma}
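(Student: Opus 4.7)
The plan is to use the algebraic characterization $\mathcal{I}(A) = \ker(\phi)$ from equation (\ref{toric-alg}) and prove the two inclusions $\mathcal{I}(\bar A) + \mathcal{J} \subseteq \mathcal{I}(A)$ and $\mathcal{I}(A) \subseteq \mathcal{I}(\bar A) + \mathcal{J}$ separately. Throughout, I would fix notation by writing $\bar R \subset R$ for the subring $\mathbb{R}[p_j : j \notin \{i_2,\ldots,i_s\}]$, and viewing $\mathcal{I}(\bar A)$ inside $R$ via this natural inclusion; the associated homomorphism $\bar\phi : \bar R \to \mathbb{R}[\zeta_0,\ldots,\zeta_t]$ is then literally the restriction $\phi|_{\bar R}$, because $\bar A$ differs from $A$ only by the deletion of rows equal to the $i_1$-th one.

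The easy inclusion $\mathcal{I}(\bar A) + \mathcal{J} \subseteq \mathcal{I}(A)$ would split into two observations. First, since rows $i_1,\ldots,i_s$ of $A$ are identical, the monomials $\phi(p_{i_1}),\ldots,\phi(p_{i_s})$ coincide, so each generator $p_{i_1}-p_{i_j}$ of $\mathcal{J}$ lies in $\ker(\phi)=\mathcal{I}(A)$. Second, any $g \in \mathcal{I}(\bar A)=\ker(\bar\phi)$ satisfies $\phi(g)=\bar\phi(g)=0$, hence $g \in \mathcal{I}(A)$.

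For the nontrivial inclusion, the plan is a reduction argument. Given $f \in \mathcal{I}(A)$, I would use the generators $p_{i_1}-p_{i_j}$ of $\mathcal{J}$ to rewrite $f$ modulo $\mathcal{J}$ by substituting $p_{i_j} \mapsto p_{i_1}$ for $j=2,\ldots,s$; the resulting polynomial $\bar f \in \bar R$ satisfies $f - \bar f \in \mathcal{J}$. It remains to show $\bar f \in \mathcal{I}(\bar A)$. Since $f \in \mathcal{I}(A)$ and $\mathcal{J} \subseteq \mathcal{I}(A)$ by the previous step, we get $\bar f \in \mathcal{I}(A) \cap \bar R$. But $\mathcal{I}(A)\cap \bar R = \ker(\phi)\cap \bar R = \ker(\phi|_{\bar R}) = \ker(\bar\phi) = \mathcal{I}(\bar A)$, so $\bar f \in \mathcal{I}(\bar A)$ and therefore $f \in \mathcal{I}(\bar A)+\mathcal{J}$, as required.

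The only delicate point I foresee is the bookkeeping for the substitution step: one has to check that rewriting $f$ modulo the linear binomials $p_{i_1}-p_{i_j}$ indeed lands in $\bar R$ and differs from $f$ by an element of $\mathcal{J}$. This is essentially the fact that $\bar R \oplus \mathcal{J} = R$ as $\mathbb{R}$-vector spaces, which can be verified monomial by monomial: replacing each $p_{i_j}$ ($j\ge 2$) by $p_{i_1}$ in any monomial produces the same monomial modulo $\mathcal{J}$. Apart from this routine verification, the proof is a direct application of the identity $\mathcal{I}(A)=\ker(\phi)$ combined with the compatibility $\phi|_{\bar R}=\bar\phi$.
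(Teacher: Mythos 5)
Your proof is correct, and it shares the paper's central device—the easy inclusion plus the reduction of an arbitrary $f\in\mathcal{I}(A)$ by substituting the repeated variables—but it verifies the final membership step by a genuinely different (and cleaner) route. The paper reduces to a single pair of equal rows, describes $T(A)$ geometrically as a hyperplane section of a cone over $T(\bar A)$, and checks that the substituted polynomial $G=F(p_2,p_2,\ldots,p_k)$ vanishes on every point of $T(\bar A)$, using the fact that points of $T(A)$ are exactly points of $T(\bar A)$ with the first coordinate duplicated; this relies on the characterization of $\mathcal{I}(A)$ as the vanishing ideal of the parametrized set. You instead stay entirely on the algebraic side, using $\mathcal{I}(A)=\ker(\phi)$ and the observation that $\bar\phi=\phi|_{\bar R}$, so that $\mathcal{I}(A)\cap\bar R=\ker(\phi)\cap\bar R=\ker(\bar\phi)=\mathcal{I}(\bar A)$. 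What your version buys: it treats all $s$ equal rows at once (the paper reduces to the two-row case, incidentally with a slip where it writes ``$s=1$''), it avoids the somewhat informal cone-and-vertex discussion, and it makes the only nontrivial bookkeeping explicit, namely that $f-\sigma(f)\in\mathcal{J}$ for the substitution map $\sigma$, which you correctly identify as the decomposition $R=\bar R\oplus\mathcal{J}$ (only $R=\bar R+\mathcal{J}$ via $f=\sigma(f)+(f-\sigma(f))$ is actually needed). What the paper's version buys is the geometric picture—$T(A)$ as a linear section of a cone—which motivates the statement, but as a proof your kernel argument is at least as rigorous and more self-contained.
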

\begin{proof}
It is enough to provide a proof in the case $s=1$, and we may
assume that the first and the second rows are equal, i.e. $i_1=1$
and $i_2=2$. Then, $T(A)=Y\cap H$ where $H$ is the hyperplane of
equation $p_1-p_2=0$ and $Y$ is the cone over $T(\bar A)$ of
vertex the point $(1,0,\ldots,0)$. These remarks yields
\[
\mathcal{I}(A)\supseteq\mathcal{I}(\bar A)+\langle p_1-p_2\rangle
 .
\]
To prove that equality holds, let $F(p_1,\ldots,p_k)\in
\mathcal{I}(A)$ and construct the polynomial
$G(p_2,\ldots,p_k)=F(p_2,p_2,\ldots,p_k)$ by setting $p_1=p_2$. It
is enough to show that $G \in\mathcal{I}(\bar A)$, i.e. it is
enough to show that $G$ vanishes on all the points of $T(\bar A)$.
Let $(q_1,\ldots,q_k)\in T(\bar A)$ and notice that, as $T(\bar
A)$ is a cone, $(x,q_2,\ldots,q_k)\in T(\bar A)$ for all values of
$x\in\mathbb{R}$.  In particular, $(q_2,q_2,\ldots,q_k)\in
T(A)=T(\bar A)\cap H$ and hence $G(q_2,\ldots,q_k)=0$ and the
proof is completed.
\end{proof}

\section{Models and toric models} \label{models-sect}

In Statistics, a two-way contingency table collects the outcomes
of two categorical random variables, say $V_1$ and $V_2$, on a
sample. If the first variable has $I$ levels, and the second
variable has $J$ levels, we conventionally denote the sample space
by the cartesian product $\{1, \ldots, , I\} \times \{ 1 , \ldots,
J\}$ and the contingency table has integer nonnegative entries
$(n_{i,j})_{i=1, \ldots, I,j=1, \ldots, J}$ where $n_{i,j}$ is the
count of the individuals with $V_1=i$ and $V_2=j$.

A probability distribution for an $I \times J$ contingency table
is an $I \times J$ table of probabilities $(p_{i,j})_{i=1, \ldots,
I,j=1, \ldots, J}$ in the probability simplex
\[
\Delta = \left\{ (p_{i,j})_{i=1, \ldots, I,j=1, \ldots, J} \ : \
p_{i,j} \geq 0 \ , \ \sum_{i,j} p_{i,j} = 1 \right\}  .
\]

A statistical model for an $I \times J$ table is a subset of
$\Delta$. A well known class of statistical models for contingency
tables is the class of log-linear models, see e.g.
\cite{bishop|fienberg|holland:07}, defined through linear
conditions on the log of the probabilities, and therefore
restricted to the interior of $\Delta$. More precisely, given an
integer matrix $A$ with dimension $IJ \times (t+1)$, the
log-linear model defined by $A$ is characterized by the linear
system
\begin{equation} \label{loglindef}
\log(p) = A \beta  .
\end{equation}
Here, $\log(p)$ is the vector of the log-probabilities (taken by
ordering the cells lexicographically), $\beta =\left( \beta_0,\ldots, \beta_t \right) \in {\mathbb
R}^{t+1}$ is the vector of model parameters. The number of free
parameters is equal to $\mathrm{rk}(A)$, the rank of $A$, and the
degrees of freedom of the model are the difference $IJ-\mathrm{
rk}(A)$.

Exponentiating Eq. \eqref{loglindef}, we obtain a different
expression of the same model, namely
\begin{equation} \label{toricdef}
p = \zeta^A  ,
\end{equation}
where $\zeta^A$ is written in vector notation, and $\zeta=
\exp(\beta)$ is the vector of nonnegative parameters. This model
is called toric model, as it expresses the probabilities in
monomial form, and it allows to extend the model on the
boundary of $\Delta$, see \cite{rapallo:07} and
\cite{drton|sturmfels|sullivant:09}. Notice that Eq.
\eqref{toricdef} is just Eq. \eqref{toric-alg} written in a
shorter notation.

Eliminating the $\zeta$ parameters from Eq. \eqref{toricdef} we
obtain that the toric model is the variety defined by the toric ideal
${\mathcal I}(A)$ introduced above:
\begin{equation}\label{toricideallatex}
{\mathcal I}(A) = \langle p^a- p^b \ : \ A^t a= A^tb \rangle
\end{equation}
and therefore the model is:
\begin{equation}
{\mathcal M}_A = T(A) \cap \Delta  .
\end{equation}
Thus, toric models are defined by a finite set of binomial
equations (up to the normalizing constant).

In this paper we will use especially the independence model,
defined by the model matrix
\[
A = [{\bf 1} , r_1, \ldots, r_I, c_1, \ldots, c_J]  ,
\]
where
\begin{itemize}
\item {\bf 1} is a column vectors of $1$s;

\item $r_i$ is the indicator vector of the cells in the $i$-th row of the table;

\item $c_j$ is the indicator vector of the cells in the $j$-th column of the table.
\end{itemize}
The independence model contains all probability matrices such that
the variables $V_1$ and $V_2$ are statistically independent. It is
known, see \cite{drton|sturmfels|sullivant:09} that the
corresponding toric ideal ${\mathcal I}(A)$ is generated by all $2
\times 2$ minors of the probability table, that is
\begin{equation} \label{toricind}
{\mathcal I}(A) = \langle p_{i,j}p_{h,k} - p_{i,k}p_{h,j} \ : \ 1
\leq i < h \leq I, \ 1 \leq j < k \leq J \rangle  .
\end{equation}

\section{Clustering algorithms and their geometric counterpart}
\label{cluster-sect}

Let $R_1, \ldots, R_I$ be the labels of the rows and $C_1, \ldots, C_J$ be the labels of the columns of the contingency table under study. The classical agglomerative hierarchical clustering algorithms for a contingency table proceed as follows:
\begin{itemize}
\item At the initial step, each label is a separate cluster:
\[
\{R_1\}, \ldots, \{R_I\},\{C_1\}, \ldots, \{C_J\}  .
\]
\item At step 1, we merge two elements of the partition above, based on some similarity criterion, and we obtain a partition with $(I+J-1)$ clusters. There are different ways for defining the similarity in this step, and such different definitions yield different clustering algorithms. In our geometric study, we do not need to choose a specific similarity criterion, since our analysis is valid in general.

\item At each step, we merge two elements of the partition, namely the two most similar elements, so that we lose a cluster at each step.

\item At step $(I+J-2)$ we define two clusters $G_1$ and $G_2$, and at the last step all row and column labels are merged together in a grand cluster.
\end{itemize}

Although in general an agglomerative clustering algorithm does not consider any restrictions on the intermediate steps, meaning that all pairs of clusters may be merged, in contingency tables analysis, the algorithms in the literature are usually defined to separately agglomerate the rows and the columns of the table, and we prove here that this prescription is fully motivated also under the geometric point of view. Therefore, we begin our analysis with the study of step 1 of the procedure above and, without loss of generality, we suppose that we have to merge the last two columns of the table.

Using the independence model, to merge the last two columns means
to constrain the equality of the last two parameters: $\beta_{t-1} =
\beta_{t}$ in the log-linear representation \eqref{loglindef} or
$\zeta_{t-1} = \zeta_{t}$ in the toric representation
\eqref{toricdef}.

In a natural way, these equalities define a sub-model obtained
from the matrix $A$ of the independence model, with a new model
matrix $\tilde A$ obtained by summing the last two columns
of $A$.

The scheme is

\begin{displaymath}
    \xymatrix{
        A=[{\bf 1},a_1, \ldots ,a_t] \ar[r]\ar[d] & \mathcal{I}(A)\ar[d] \\
        \tilde A=[{\bf 1},a_1, \ldots ,a_{t-2},a_{t-1}+a_{t}] \ar[r]& \mathcal{I}({\tilde A})}
\end{displaymath}

The relation between these two ideals is given in Theorem \ref{diagramprop}, namely
\begin{equation}\label{IAeItildaA}
\mathcal{I}(\tilde A)= \mathcal{I}(A)+{\mathcal J},
\end{equation}
where ${\mathcal J}$ is a suitable ideal.

\begin{remark}
In this paper we assume that the first column of $A$ is ${\bf
1}=(1, \ldots, 1)^T$. This a standard assumption for log-linear
models and it does not affect the generality of our analysis. The
algebraic counterpart of this assumption is the fact that the
toric ideals involved are homogeneous, in particular it is
possible to generate them using homogeneous binomials.
\end{remark}

In terms of binomials, it is important to know whether the toric
model associated to $\tilde A$ satisfies linear equations.
Especially when this equations come from the ideal $\mathcal{J}$
in \eqref{IAeItildaA}, this means that the linear binomials are
generated by the clustering process. The relevance of degree one
elements of the toric ideal rest on the identification of two
cells of the table as follows.

\begin{remark}
The identification (or clustering) of two cells, say $(i,j)$ and $(h,k)$,
has its algebraic counterpart in the linear binomial
\[
p_{i,j} - p_{h,k}  .
\]
In fact, the equation $p_{i,j} - p_{h,k} = 0$ adds a constraint
and the probabilities of the cells $k$ and $h$ must be equal.
\end{remark}

We now describe what is the effect on the toric ideals of summing
up two columns of the matrix $A$. In the examples below the
computations are carried out with the software {\tt 4ti2}, see
\cite{4ti2}.

\begin{example}\label{ex1}
For instance the following matrix
is the model matrix of the independence model for $3 \times 3$
contingency tables, and represents the most simple example for our
theory.
\begin{equation} \label{mat3x3}
A=\left(\begin{array}{ccccccc}
1 & 1 & 0 & 0 & 1 & 0 & 0 \\
1 & 1 & 0 & 0 & 0 & 1 & 0 \\
1 & 1 & 0 & 0 & 0 & 0 & 1 \\
1 & 0 & 1 & 0 & 1 & 0 & 0 \\
1 & 0 & 1 & 0 & 0 & 1 & 0 \\
1 & 0 & 1 & 0 & 0 & 0 & 1 \\
1 & 0 & 0 & 1 & 1 & 0 & 0 \\
1 & 0 & 0 & 1 & 0 & 1 & 0 \\
1 & 0 & 0 & 1 & 0 & 0 & 1
\end{array}\right)
\end{equation}
and the corresponding toric ideal is
\begin{equation}
\begin{split}
\mathcal{I}(A) = \langle -p_{1,3}p_{2,1} +p_{1,1}p_{2,3},
p_{1,3}p_{3,2} -p_{1,2}p_{3,3}, p_{2,3}p_{3,2} -p_{2,2}p_{3,3}, \\
p_{2,3}p_{3,1} -p_{2,1}p_{3,3}, -p_{1,2}p_{2,1} +p_{1,1}p_{2,2},
p_{1,3}p_{2,2} -p_{1,2}p_{2,3}, p_{2,2}p_{3,1} -p_{2,1}p_{3,2}, \\
p_{1,3}p_{3,1} -p_{1,1}p_{3,3}, -p_{1,2}p_{3,1} +p_{1,1}p_{3,2}
\rangle  .
\end{split}
\end{equation}
If we merge the last two columns of the contingency table, we obtain a model matrix
$\tilde A$ with some repeated rows, and the
corresponding toric ideal is
\begin{equation}
\begin{split}
\mathcal{I}(\tilde A) = \langle p_{3,2} -p_{3,3}, -p_{1,2}
+p_{1,3}, -p_{2,2} +p_{2,3}, p_{1,2}p_{3,1} -p_{1,1}p_{3,3}, \\
p_{2,2}p_{3,1} -p_{2,1}p_{3,3}, -p_{1,2}p_{2,1} +p_{1,1}p_{2,2}
\rangle  .
\end{split}
\end{equation}
The new ideal $\mathcal{I}(\tilde A)$ contains ${\mathcal I}(A)$
and has three additional generators: $p_{3,2} -p_{3,3}$, $-p_{1,2}
+p_{1,3}$, and $-p_{2,2} +p_{2,3}$. This means that the analytical
condition $\beta_5=\beta_6$ in the log-linear formulation of the
independence model translates into the identification of the cells
in the last two columns of the $3 \times 3$ table: $(1,2)$ with
$(1,3)$ in the first row, $(2,2)$ with $(2,3)$ in the second row,
and  $(3,2)$ with $(3,3)$ in the third row. A similar argument can be repeated when summing any two columns which are both rows indicators, or columns indicators of the table.
\end{example}

\begin{remark}
Through a quick inspection of the generators of ${\mathcal I}(A)$ in the example above, one sees that the ideal is generated by:
\begin{itemize}
\item[(a)] three binomials of degree two: $p_{1,2}p_{3,1} -p_{1,1}p_{3,3},
p_{2,2}p_{3,1} -p_{2,1}p_{3,3}, -p_{1,2}p_{2,1} +p_{1,1}p_{2,2}$. Such binomials are the generators of the toric ideal of the independence model for a $3 \times 2$ table;

\item[(b)] three binomials of degree one: $p_{3,2} -p_{3,3}, -p_{1,2}
+p_{1,3}, -p_{2,2} +p_{2,3}$. Such binomial yields the identification of the two merged columns, as discussed above.
\end{itemize}
The agglomeration of the last two columns of the table is sketched in Figure \ref{fig-merge}.
In the next section, we will prove that this fact is valid in general for the independence model.

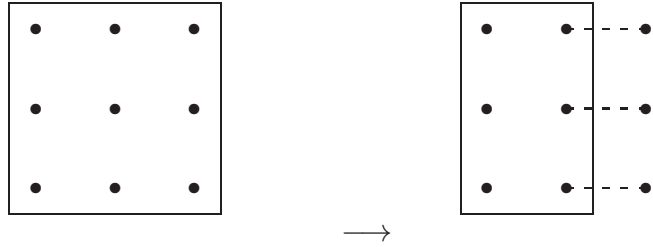
\begin{figure}
\begin{center}
\begin{tabular}{ccc}
\begin{picture}(100,100)(0,0)
\put(10,10){\line(0,1){80}}
\put(10,10){\line(1,0){80}}
\put(10,90){\line(1,0){80}}
\put(90,10){\line(0,1){80}}

\put(20,20){\circle*{4}}
\put(20,50){\circle*{4}}
\put(20,80){\circle*{4}}

\put(50,20){\circle*{4}}
\put(50,50){\circle*{4}}
\put(50,80){\circle*{4}}

\put(80,20){\circle*{4}}
\put(80,50){\circle*{4}}
\put(80,80){\circle*{4}}

\end{picture}

& \qquad $\longrightarrow$ \qquad &

\begin{picture}(100,100)(0,0)
\put(10,10){\line(0,1){80}}
\put(10,10){\line(1,0){50}}
\put(10,90){\line(1,0){50}}
\put(60,10){\line(0,1){80}}

\put(20,20){\circle*{4}}
\put(20,50){\circle*{4}}
\put(20,80){\circle*{4}}

\put(50,20){\circle*{4}}
\put(50,50){\circle*{4}}
\put(50,80){\circle*{4}}

\put(80,20){\circle*{4}}
\put(80,50){\circle*{4}}
\put(80,80){\circle*{4}}

\put(50,20){\dashline[8]{3}(0,0)(30,0)}
\put(50,50){\dashline[8]{3}(0,0)(30,0)}
\put(50,80){\dashline[8]{3}(0,0)(30,0)}
\end{picture}
\end{tabular}
\end{center}
\caption[]{The agglomeration of columns $2$ and $3$ in a $3 \times 3$ table described in Example \ref{ex2}. The boxes represent the regions where the independence model holds, and the lines represent the identified cells.} \label{fig-merge}
\end{figure}
\end{remark}

The behavior exhibited in Example \ref{ex1} is very peculiar and, in general, the toric ideal is not so easily described. The next example is not directly related to the independence model, but it shows that linear polynomials are not
enough in general for describing the toric ideal ${\mathcal
I}(\tilde A)$. Two more examples shows that linear
polynomials does not appear when merging a row with a column of
the table.

\begin{example}\label{ex2} In general, it is not enough to add linear binomials to ${\mathcal I}(A)$ to obtain ${\mathcal I}(\tilde A)$. For instance, if we compute the toric ideal associated to the following matrix
\[
A=\left(\begin{array}{ccccc}
1 & 0 & 1 & 0 & 0 \\
0 & 1 & 0 & 1 & 0 \\
1 & 1 & 0 & 0 & 0 \\
0 & 0 & 1 & 0 & 1
\end{array}\right)
\]
in the ring ${\mathbb R}[p_1,p_2,p_3,p_4]$, we obtain the zero
ideal, i.e. $T(A)=\mathbb{R}^4$ but if we sum the last two columns of $A$, we get the
matrix $\tilde A$ and $\mathcal{I}({\tilde A}) = \mathcal{I}(A) + (p_1p_2-p_3p_4)$. Thus
a generator of degree two appears in $\mathcal{I}({\tilde A})$.
\end{example}

\begin{example}
Now we sum two columns of the
model matrix of the independence model pertaining to different objects, i.e., one indicator
vector of a row and one indicator vector of a column. Let us
consider Example \ref{ex1} on the $3 \times 3$ independence
model, but now we sum the indicator vectors of the last row and of
the last column. The new matrix is now
\begin{equation} \label{tildeb}
\tilde B=\left(\begin{array}{ccccccc}
1 & 1 & 0 & 0 & 1 & 0  \\
1 & 1 & 0 & 0 & 0 & 1  \\
1 & 1 & 0 & 1 & 0 & 0  \\
1 & 0 & 1 & 0 & 1 & 0  \\
1 & 0 & 1 & 0 & 0 & 1  \\
1 & 0 & 1 & 1 & 0 & 0  \\
1 & 0 & 0 & 1 & 1 & 0  \\
1 & 0 & 0 & 1 & 0 & 1  \\
1 & 0 & 0 & 2 & 0 & 0
\end{array}\right)  .
\end{equation}
As the matrices $A$ and $\tilde B$ have the same kernel, the toric
ideals must be equal: ${\mathcal I}(A) = {\mathcal I}(\tilde B)$.
From the point of view of Statistics, this behavior is natural,
since no degrees of freedom are added when moving from $A$ to
$\tilde B$.
\end{example}

\begin{example}
When the agglomeration process is iterated twice the situation is more complicated. Let us consider the independence model
for $4 \times 4$ contingency tables, whose model matrix $A$
written as in Eq. \eqref{mat3x3} with the suitable number of rows
and columns. The matrix $A$ has $16$ rows and $9$ columns: $1$
column of $1$s, $4$ columns with the indicator of the rows, and
$4$ columns with the indicators of the columns. The toric ideal
${\mathcal I}(A)$ is generated by the $36$ $2 \times 2$ minors of
the probability table, as prescribed from Eq. \eqref{toricind}.
When we merge the last row with the last column of the table
(i.e., we sum the fifth and the ninth column of $A$), we obtain a
matrix $\tilde B$ in analogy with the matrix in Eq. \eqref{tildeb}
and the corresponding ideal is ${\mathcal I}(\tilde B) = {\mathcal
I}(A)$. If we merge two further objects, namely the third row with
the third column of the table, we define a new matrix $\tilde C$,
and the ideal is
\[
{\mathcal I}(\tilde C) = {\mathcal I}(A) + \langle p_{4,3}-p_{3,4} , p_{3,4}^2 - p_{3,3}p_{4,4} \rangle
\]
and the two added binomials do not have a simple statistical counterpart.
\end{example}

\section{General facts and application to the independence model}
\label{main-sect}

There is a quite general relation between the toric ideal of $A$ and the one of $\tilde A$.

\begin{thm}\label{diagramprop} Consider the diagram of commutative rings
\begin{displaymath}
    \xymatrix{
        k[p_1,\ldots,p_{IJ}] \ar[d]_\psi \ar[r]_\phi & k[\zeta_0,\ldots,\zeta_{t-1} ,\zeta_{t}]\ar[dl]_\alpha \\
        k[\zeta_0,\ldots,\zeta_{t-1}]                       & }
\end{displaymath}

where the maps are defined as follows:
\[\phi(p_i)=\zeta_0^{1}\zeta_1^{a_{i,1}}\cdot\ldots\cdot\zeta_t^{a_{i,t}},\]
\[\psi(p_i)=\zeta_0^{1}\zeta_1^{a_{i,1}}\cdot\ldots\cdot\zeta_{t-1}^{a_{i,t}+a_{i,t}}  ,\]
and $\alpha(\zeta_i)=\zeta_i$ for $i\neq t$ and  $\alpha(\zeta_t)=\zeta_{t-1}$.
Then, with the notation of the previous sections, we have
\[\mathcal{I}(\tilde A)=\mathcal{I}(A)+\alpha^{-1}(\zeta_{t-1}-\zeta_t).\]
\end{thm}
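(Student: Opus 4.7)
The plan is to reduce the identity to the elementary ring-theoretic fact that the kernel of a composition is the preimage of the kernel, once the diagram is shown to commute. Concretely, I would first verify that $\psi=\alpha\circ\phi$ by tracing a generator: $\phi(p_i)=\zeta_0\zeta_1^{a_{i,1}}\cdots\zeta_t^{a_{i,t}}$, and applying $\alpha$ replaces $\zeta_t$ by $\zeta_{t-1}$ and fixes every other variable, so $\alpha(\phi(p_i))=\zeta_0\zeta_1^{a_{i,1}}\cdots\zeta_{t-1}^{a_{i,t-1}+a_{i,t}}=\psi(p_i)$. Since the $p_i$ generate the polynomial ring, the two homomorphisms are equal.

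Next, I would invoke the definitions $\mathcal{I}(A)=\ker\phi$ and $\mathcal{I}(\tilde A)=\ker\psi$ from Section \ref{toric-sect}, and conclude from the commutative triangle that
\[
\mathcal{I}(\tilde A)=\ker(\alpha\circ\phi)=\phi^{-1}(\ker\alpha).
\]
The map $\alpha$ is exactly the quotient of $k[\zeta_0,\ldots,\zeta_t]$ by the relation $\zeta_t=\zeta_{t-1}$, so $\ker\alpha$ is the principal ideal $(\zeta_{t-1}-\zeta_t)$. Substituting gives $\mathcal{I}(\tilde A)=\phi^{-1}\bigl((\zeta_{t-1}-\zeta_t)\bigr)$, which is the content of the theorem once the symbol $\alpha^{-1}(\zeta_{t-1}-\zeta_t)$ is read as the $\phi$-pullback of the kernel of $\alpha$. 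The sum decomposition $\mathcal{I}(A)+\phi^{-1}\bigl((\zeta_{t-1}-\zeta_t)\bigr)$ is then automatic because $\mathcal{I}(A)=\phi^{-1}(0)$ is already contained in $\phi^{-1}\bigl((\zeta_{t-1}-\zeta_t)\bigr)$, so adding it does nothing.

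The only real obstacle is notational rather than mathematical: strictly speaking $\zeta_{t-1}-\zeta_t$ lives in the source of $\alpha$, not in its target, so the phrase $\alpha^{-1}(\zeta_{t-1}-\zeta_t)$ cannot be taken at face value. The natural reading, and the one made necessary by the commutative diagram, is the $\phi$-preimage of the diagonal ideal $(\zeta_{t-1}-\zeta_t)\subset k[\zeta_0,\ldots,\zeta_t]=\ker\alpha$. After that interpretation is fixed, the whole argument is essentially the single line $\ker(\alpha\circ\phi)=\phi^{-1}(\ker\alpha)$, so there is no genuine computational difficulty — no Gröbner basis calculation or elimination is required at this level of generality, which is precisely the appeal of this framework for the subsequent application to the independence model.
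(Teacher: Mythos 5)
Your proposal is correct and is exactly the paper's argument spelled out in full: the paper's proof consists of the single observation that the diagram commutes (it writes $\phi=\alpha\circ\psi$, evidently a typo for your $\psi=\alpha\circ\phi$), from which $\mathcal{I}(\tilde A)=\ker\psi=\phi^{-1}(\ker\alpha)=\phi^{-1}\bigl((\zeta_{t-1}-\zeta_t)\bigr)$ follows just as you say. Your reading of the loose notation $\alpha^{-1}(\zeta_{t-1}-\zeta_t)$ as the $\phi$-preimage of $\ker\alpha$, and your remark that adding $\mathcal{I}(A)=\phi^{-1}(0)$ is redundant, are the right way to make the statement precise.
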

\begin{proof} The proof follows immediately noticing that $\phi=\alpha\circ\psi$.
\end{proof}

We now derive some interesting facts about the degree one elements of $\mathcal I(\tilde A)$.

\begin{proposition}
A linear binomial $p_k-p_h$ belongs to $\mathcal I{\tilde A}$ if
and only if $a_r(k)=a_r(h)$ for $r=0, \ldots, t-2$ and
$a_{t-1}(k)+a_{t}(k)=a_{t-1}(h)+a_{t}(h)$.
\end{proposition}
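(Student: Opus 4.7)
The plan is to reduce the question to an elementary comparison of two monomials by leveraging the description $\mathcal{I}(\tilde A) = \ker(\psi)$ implicit in Theorem \ref{diagramprop}. Recall from Section \ref{toric-sect} that the toric ideal associated with any nonnegative integer matrix $B$ coincides with the kernel of the ring homomorphism sending $p_i$ to the monomial prescribed by the $i$-th row of $B$. Applied to $\tilde A$, whose $i$-th row is $(1, a_1(i), \ldots, a_{t-2}(i), a_{t-1}(i)+a_t(i))$, this is exactly the map $\psi$ introduced in the diagram. Hence a linear binomial $p_k - p_h$ lies in $\mathcal{I}(\tilde A)$ if and only if $\psi(p_k) = \psi(p_h)$ as elements of $k[\zeta_0, \ldots, \zeta_{t-1}]$.

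Next I would observe that $\psi(p_k)$ and $\psi(p_h)$ are each a single monomial with coefficient one, so their equality in the polynomial ring is the same as equality of their exponent vectors. Reading these exponents off the defining formula for $\psi$, the identity $\psi(p_k) = \psi(p_h)$ unpacks into the componentwise conditions $a_r(k) = a_r(h)$ for $r = 0, 1, \ldots, t-2$, together with the merged condition $a_{t-1}(k) + a_t(k) = a_{t-1}(h) + a_t(h)$. Under the standing convention that the first column of $A$ equals $\mathbf{1}$, the $r=0$ condition is automatic, and what remains is precisely the set of conditions listed in the proposition. Both implications thus follow simultaneously.

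No step is genuinely hard here: the content is entirely in matching exponent vectors, and the conceptual work of linking the merging operation to the homomorphism $\psi$ has already been carried out in Theorem \ref{diagramprop}. The only thing requiring care is the indexing, namely tracking the shift between equation \eqref{toric-alg} in Section \ref{toric-sect} and the slightly different convention used in the statement of Theorem \ref{diagramprop}, which writes the all-ones first column explicitly and places its contribution in the exponent of $\zeta_0$.
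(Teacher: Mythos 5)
Your proposal is correct and takes essentially the same approach as the paper: both identify $\mathcal{I}(\tilde A)$ with the kernel of the substitution map $\psi$ determined by the rows of $\tilde A$, and reduce membership of $p_k-p_h$ to equality of the two monomials $\psi(p_k)$ and $\psi(p_h)$, i.e.\ to componentwise equality of their exponent vectors. The indexing remark about the all-ones column is a harmless refinement and does not change the argument.
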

\begin{proof}
We directly use the parametrization to compute
\[p_h-p_k=\tilde\zeta_0 \tilde\zeta_1^{a_1(h)} \ldots \tilde\zeta_{t-2}^{a_{t-2}(h)}\tilde\zeta_{t-1}^{a_{t-1}(h)+a_{t}(h)}-\tilde\zeta_0 \tilde\zeta_1^{a_1(k)} \ldots \tilde\zeta_{t-2}^{a_{t-2}(k)}\tilde\zeta_{t-1}^{a_{t-1}(k)+a_{t}(k)}\]
and this is the zero polynomial if and only if the statement holds.
\end{proof}

In several situation of practical interest, the model matrix is a binary matrix, i.e. it only involves zeros and ones. This is the case, for example, for the independence model.

\begin{corollary}
Let $A$ and ${\tilde A}$ as above be two binary matrices. A linear
polynomial $p_k-p_h$ belongs to ${\mathcal I}(\tilde A)$ if and
only if it belongs to ${\mathcal I}(A)$ or $a_r(k)=a_r(h)$ for
$r=0, \ldots, t-2$ and $a_{t-1}(k) \ne a_{t}(k), a_{t-1}(h) \ne
a_{t}(h)$.
\end{corollary}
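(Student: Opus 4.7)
The plan is to derive this as a direct consequence of the preceding proposition by exploiting the fact that, for binary entries, only $\{0,1\}$ values are allowed in columns $t-1$ and $t$. The key observation is that the equality $a_{t-1}(k)+a_{t}(k)=a_{t-1}(h)+a_{t}(h)$ appearing in the proposition involves a sum whose value lies in $\{0,1,2\}$, and the extremes $0$ and $2$ force the summands individually to agree, while the intermediate value $1$ is exactly the case where the two entries differ.

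So I would start from the proposition: $p_k-p_h\in\mathcal{I}(\tilde A)$ is equivalent to $a_r(k)=a_r(h)$ for $r=0,\dots,t-2$ together with the equality $s(k)=s(h)$, where $s(i):=a_{t-1}(i)+a_{t}(i)$. I then split on the common value $s:=s(k)=s(h)\in\{0,1,2\}$. If $s=0$, then $a_{t-1}(k)=a_t(k)=a_{t-1}(h)=a_t(h)=0$; if $s=2$, then all four entries equal $1$. In either case $a_{t-1}(k)=a_{t-1}(h)$ and $a_t(k)=a_t(h)$, which combined with the agreement in columns $0,\dots,t-2$ shows rows $k$ and $h$ of $A$ coincide entirely, whence $p_k-p_h\in\mathcal{I}(A)$. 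If instead $s=1$, then $\{a_{t-1}(k),a_t(k)\}=\{0,1\}$ and $\{a_{t-1}(h),a_t(h)\}=\{0,1\}$, i.e.\ $a_{t-1}(k)\ne a_t(k)$ and $a_{t-1}(h)\ne a_t(h)$. This exhausts the forward direction and produces exactly the two alternatives stated.

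For the converse I would note both implications are immediate: if $p_k-p_h\in\mathcal{I}(A)$ then rows $k$ and $h$ of $A$ agree, hence rows $k$ and $h$ of $\tilde A$ agree (since $\tilde A$ is obtained by summing two columns of $A$), so $p_k-p_h\in\mathcal{I}(\tilde A)$. The second alternative directly yields $a_{t-1}(k)+a_t(k)=1=a_{t-1}(h)+a_t(h)$, which together with $a_r(k)=a_r(h)$ for $r=0,\dots,t-2$ gives, by the proposition, $p_k-p_h\in\mathcal{I}(\tilde A)$.

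There is no real obstacle here — the corollary is essentially a case analysis — but I would be careful to emphasize why binariness is used: it is what prevents compensating differences such as $a_{t-1}(k)=2,a_t(k)=0$ versus $a_{t-1}(h)=1,a_t(h)=1$, which would give the same sum without either row being characterized by the dichotomy of the second alternative. In the non-binary setting the corollary fails precisely for this reason, and the proposition remains the sharp statement.
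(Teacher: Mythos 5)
Your proof is correct and takes essentially the same route as the paper: a case analysis on the binary entries in columns $t-1$ and $t$, combined with the preceding proposition (the paper phrases this as an enumeration of the possible $2\times 2$ subtables in those columns and invokes Lemma \ref{repeatedlemma} for the diagonal configuration). If anything, your split on the common value of the sum $s\in\{0,1,2\}$ is slightly more complete, since the paper's list of four configurations omits rows having $(1,1)$ in the two merged columns, cases which your $s=2$ and unequal-sum analysis covers automatically.
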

\begin{proof}
Let $k$ and $h$ be two rows of $A$ such that $a_r(k)=a_r(h)$ for
$r=0, \ldots, t-2$. Let us consider the $2 \times 2$ subtable of
$A$ formed by the rows $k$ and $h$, and by the columns $t-1$ and
$t$. It is easy to check that there are only four possible
configurations, up to permutations of rows and columns:
\begin{center}
\begin{tabular}{ccccccc}
$\begin{pmatrix}
1 & 0 \\
0 & 1
\end{pmatrix}$
& \qquad &
$\begin{pmatrix}
1 & 0 \\
1 & 0
\end{pmatrix}$
& \qquad &
$\begin{pmatrix}
1 & 0 \\
0 & 0
\end{pmatrix}$
& \qquad &
$\begin{pmatrix}
0 & 0 \\
0 & 0
\end{pmatrix}$ \\
$(a)$ & & $(b)$ & & $(c)$ & & $(d)$
\end{tabular}
\end{center}
Among such configurations, only the first one satisfies the last
condition of the statement. In configuration $(a)$, the new
binomial $p_k-p_h$ is added to ${\mathcal I}(\tilde A)$ as a
consequence of Lemma \ref{repeatedlemma}, while in
configurations $(b)$ and $(d)$ the binomial $p_k-p_h$ is already
in ${\mathcal I}(A)$ because the rows $k$-th and $h$-th of $A$ are
equal. Configuration $(c)$ does not produce any linear
binomial since the two rows in ${\tilde A}$ are not equal.
\end{proof}

We conclude with a nice result about the independence model. This
explain why Example \ref{ex1} is so well-behaved.

\begin{proposition}\label{mainres}
Let $A$ be the model matrix of an independence model. Produce a
matrix $\tilde A$ by summing up two columns of $A$ both rows
indicators, or both columns indicators, of the table. Then,
$$\mathcal{I}(\tilde A)=\mathcal{I}(A)+\mathcal{J}$$ where the ideal
$\mathcal{J}$ is generated by linear binomials. More precisely, if
we sum the indicator functions of columns $a$ and $b$, then
$\mathcal{J}$ contains all the binomials $p_{i,a}-p_{i,b}$ for all
values of $i$.
\end{proposition}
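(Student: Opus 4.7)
The strategy is to reduce the claim to Lemma~\ref{repeatedlemma} by exploiting the fact that, for the independence model, summing two column indicators (or two row indicators) of $A$ forces many rows of $\tilde A$ to coincide.

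First I would verify the key combinatorial observation. Without loss of generality, assume we sum the column indicators $c_a$ and $c_b$ (the symmetric row case is handled by transposing the table). For each $i\in\{1,\ldots,I\}$, the two rows of $\tilde A$ labelled by the cells $(i,a)$ and $(i,b)$ become identical: both carry a $1$ in the all-ones column, a $1$ in $r_i$, a $1$ in the merged column $c_a+c_b$, and $0$ everywhere else. Hence $\tilde A$ contains exactly $I$ disjoint pairs of equal rows.

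Iterating Lemma~\ref{repeatedlemma} over these pairs then yields
\[
\mathcal{I}(\tilde A)\;=\;\mathcal{I}(\bar{\tilde A})+\mathcal{J},
\]
where $\bar{\tilde A}$ is the matrix obtained from $\tilde A$ by deleting the rows indexed by $(i,b)$ for $i=1,\ldots,I$, and $\mathcal{J}=\langle p_{i,a}-p_{i,b}:i=1,\ldots,I\rangle$. These are exactly the linear binomials claimed by the statement, so it remains to show that $\mathcal{I}(\bar{\tilde A})+\mathcal{J}=\mathcal{I}(A)+\mathcal{J}$.

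The main step is to identify $\bar{\tilde A}$: since the restriction of $c_a+c_b$ to the surviving rows is precisely the indicator of the cells $(i,a)$, the matrix $\bar{\tilde A}$ is (up to relabelling) the model matrix of the independence model for an $I\times(J-1)$ table, with columns $a$ and $b$ collapsed into the single cluster-column indexed by $a$. By Eq.~\eqref{toricind}, $\mathcal{I}(\bar{\tilde A})$ is generated by the $2\times 2$ minors of this smaller table. Each such minor is also a $2\times 2$ minor of the original table, so $\mathcal{I}(\bar{\tilde A})\subseteq\mathcal{I}(A)$; consequently $\mathcal{I}(\bar{\tilde A})+\mathcal{J}\subseteq\mathcal{I}(A)+\mathcal{J}$. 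The reverse inclusion I would check generator by generator on the $2\times 2$ minors of $A$: those avoiding column $b$ already lie in $\mathcal{I}(\bar{\tilde A})$, while any minor $p_{i,b}p_{h,k}-p_{i,k}p_{h,b}$ involving column $b$ is rewritten modulo $\mathcal{J}$ by the substitution $p_{\cdot,b}\mapsto p_{\cdot,a}$, producing either a generator of $\mathcal{I}(\bar{\tilde A})$ (if $k\neq a$) or zero (the degenerate subcase $k=a$, when both merged columns appear). Combining the inclusions gives the claim.

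No step in this plan is technically difficult. The only place where I expect to have to be careful is the final generator-by-generator bookkeeping — in particular, the degenerate minor that mixes both merged columns $a$ and $b$, which must be checked to collapse to zero modulo $\mathcal{J}$ rather than producing a spurious generator.
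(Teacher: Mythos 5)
Your proposal is correct and follows essentially the same route as the paper: duplicate rows of $\tilde A$ are removed via Lemma~\ref{repeatedlemma}, the reduced matrix is recognized as the independence model matrix of the $I\times(J-1)$ table, and the two ideals are compared through their $2\times 2$-minor generators. Your final generator-by-generator substitution $p_{\cdot,b}\mapsto p_{\cdot,a}$ (including the degenerate minor involving both merged columns) simply spells out what the paper dismisses as ``immediate to check,'' so there is no substantive difference.
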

\begin{proof}
Without loss of generality let us assume that $A$ is the model
matrix of an independence model over a $I\times J$ table $T_1$.
Also, we assume to sum the last two columns of $A$, say column
$t-1$ and columns $t$, coming from the indicator functions of the
last two columns of the table, say column $J-1$ and column $J$.
Notice that $\tilde A$ has rows which are equal in pairs, namely
the rows indexed by
\[
p_{1,J-1},\ldots,p_{I,J-1}
\]
coincide, respectively, with the rows indexed by,
\begin{equation}\label{multiplerows}
p_{1,J},\ldots,p_{I,J}  .
\end{equation}
Thus, we construct the matrix $\bar B$ obtained by $\tilde A$
deleting all the rows in \eqref{multiplerows}. Using Lemma
\ref{repeatedlemma}, we know that
\[
\mathcal{J}(\bar B)+\langle
p_{1,J-1}-p_{1,J},\ldots,p_{I,J-1}-p_{I,J}\rangle=\mathcal{J}({\tilde
A})  .
\]

Now, we notice that $\bar B$ is the model matrix of an
independence model on the table $T_2$ obtained by removing column
$J$ by table $T_1$. In particular, $\mathcal{J}(\bar B)$ is
generated by the $2\times 2$ minors of $T_2$, i.e. by the $2\times
2$ minors of $T_1$ not involving the last column, i.e. by the
$2\times 2$ minors not involving any of the variables
$p_{1,J},\ldots,p_{I,J}$. Thus, it is immediate to check that
\[
\mathcal{J}(A)+\langle
p_{1,J-1}-p_{1,J},\ldots,p_{I,J-1}-p_{I,J}\rangle=\mathcal{J}(\bar
B)+\langle p_{1,J-1}-p_{1,J},\ldots,p_{I,J-1}-p_{I,J}\rangle  ,
\]
and the conclusion follows.
\end{proof}

\begin{remark} Proposition \ref{mainres} is the key for iterating the clustering process within the independence model. Identifying two columns of the table produces linear equations given by the identified cells and a smaller table. If we now identify columns, or rows, of this new table, we have to add new linear equations and we have to deal with new, smaller tables.

\end{remark}

\section{Further questions} \label{further-sect}

In this paper we addressed the following question:

\begin{quote}
\item[$Q_1$] What does it happen when we identify two parameters
of the log-linear (or toric) model? Recall that the first
parameter $\beta_0$ is fixed, so that the question only concerns
$\beta_1, \ldots, \beta_t$?
\end{quote}

We provided examples and general results, see Theorem
\ref{diagramprop}, about this question.  Also, we answered
question $Q_1$ in some special cases, e.g. for the independence
model we produced a complete answer, see Proposition
\ref{mainres}. Question $Q_1$ leads to new interesting problems to
be addressed, both from the point of view of Algebra and from the
point of view of applications to Statistics.

First, it is now natural to consider new questions, which is in
some sense the converse of $Q_1$, namely:
\begin{quote}
\item[$Q_2$]
Given a toric model with model matrix $A$, what does it happen to
$\mathcal{I}(A)$ (and to the model matrix $A$) when we add a linear
binomial of the form $p_k-p_h$ to the generators of $\mathcal{I}(A)$?
\end{quote}

The treatment of this question will be subject to further
analysis. We think that the following is a promising approach:
first we form the toric ideal associated to the matrix $A$.
\[
A=[a_0, \ldots ,a_t]  \mapsto {\mathcal I}(A)  .
\]
Then we form the ideal $\mathcal{J}={\mathcal I}(A)+\langle
p_k-p_h \rangle$ and we consider the following questions: under
which conditions on the linear binomial $p_h-p_k$ is
$\mathcal{J}=\mathcal{I}(B)$ for some matrix $B$? If, for a given
choice of a linear binomial, $\mathcal{J}$ is not a toric ideal,
how can we find a matrix $B$ such that
$\mathcal{J}\subset\mathcal{I}(B)$?

From the perspective of the analysis of statistical models, it is
interesting to study other approaches to cluster analysis for
contingency tables. In this paper, we focused our attention to the
hierarchical agglomerative algorithms, but there are also
different approaches to the problem. For instance, in
\cite{govaert|nadif:07} the problem of clustering is considered in the framework of mixture models. Moreover, also with the
names of bi-clustering or co-clustering, several algorithms are
now available for a wide range of applications, from Psychometry
to Molecular Biology. For more details on such techniques see for
instance \cite{ritschard|zighed:03}, \cite{madeira|oliveira:04},
and \cite{govaert|nadif:10}. Such procedures for finding patterns
in data matrices differ in several aspects (the patterns they
seek, the assumptions on the underlying probability models, and so
on), but all of them are essentially based on mixtures of
independence models or other log-linear models with support on
suitable regions of the contingency table to be determined by
minimizing some cost functions. The geometry of such statistical
models is strictly related with the notion of nonnegative rank of
a matrix, and some preliminary results can be found in
\cite{cohen|rothblum:93}, \cite{carlini|rapallo:10} and
\cite{boccietal:11}, where some applications to mixture models in
statistics are introduced.

\section*{Acknowledgements}
F. Rapallo is partially supported by the Italian Ministry for
University and Research, programme PRIN2009, grant number
2009H8WPX5.

\bibliographystyle{elsarticle-num}
\bibliography{carlinirapallo}







\end{document}